 \theoremstyle{plain}
\newtheorem{theorem}{Theorem}
\newtheorem{corollary}[theorem]{Corollary}
\newtheorem{lemma}[theorem]{Lemma}
\newtheorem{proposition}[theorem]{Proposition}
\theoremstyle{definition}
\newtheorem{definition}{Definition}
\theoremstyle{remark}
\numberwithin{equation}{section}
\numberwithin{theorem}{section}
\newcommand{\bT}{\begin{theorem}}
\newcommand{\eT}{\end{theorem}}
\newcommand{\bProp}{\begin{proposition}}
\newcommand{\eProp}{\end{proposition}}
\newcommand{\bE}{\begin{example}}
\newcommand{\eE}{\end{example}}
\newcommand{\bL}{\begin{lemma}}
\newcommand{\eL}{\end{lemma}}
\newcommand{\bP}{\begin{proof}}
\newcommand{\eP}{\end{proof}}
\newcommand{\bC}{\begin{corollary}}
\newcommand{\eC}{\end{corollary}}
\newcommand{\bD}{\begin{definition}}
\newcommand{\eD}{\end{definition}}
\newcommand{\be}{\begin{enumerate}}
\newcommand{\ee}{\end{enumerate}}
\newcommand{\beqa}{\begin{eqnarray*}}
\newcommand{\eeqa}{\end{eqnarray*}}
\newcommand{\beqaa}{\begin{eqnarray}}
\newcommand{\eeqaa}{\end{eqnarray}}
\newcommand{\ba}{\begin{array}}
\newcommand{\ea}{\end{array}}
\newdimen\plusheight
\def\+{\;\lower\plusheight\hbox{$+$}\;}
\newdimen\minusheight
\def\-{\;\lower\minusheight\hbox{$-$}\;}
\newdimen\cdotsheight
\def\cds{\lower\cdotsheight\hbox{$\cdots$}}
\begin{document}
\title[Further Results on Vanishing Coefficients ]
       {Further Results on Vanishing Coefficients in Infinite Product Expansions}
       {\allowdisplaybreaks
\author{James Mc Laughlin}
\address{Mathematics Department\\
 25 University Avenue\\
West Chester University, West Chester, PA 19383}
\email{jmclaughl@wcupa.edu}}


 \keywords{ $q$-Series, Infinite Products, Infinite $q$-Products, Vanishing Coefficients }
 \subjclass[2000]{Primary:11B65. Secondary: 33D15, 05A19.}
\thanks{This work was partially supported by a grant from the Simons Foundation (\#209175 to James McLaughlin).
 }
\date{\today}

\begin{abstract}
We extend results of Andrews and Bressoud on the vanishing of coefficients in the series expansions of certain infinite products.
 These results have the form
that if \begin{equation*}
\frac{(q^{r-tk},q^{mk-(r-tk)};q^{mk})_{\infty}}{(q^{r},q^{mk-r};q^{mk})_{\infty}}=:\sum_{n=0}^{\infty} c_nq^n,
\end{equation*}
for certain integers $k$, $m$ $s$ and $t$, where $r=sm+t$, then $c_{kn-rs}$ is always zero.
Our theorems also partly give a simpler reformulation of  results of Alladi and Gordon, but also give results for cases not covered by the theorems of Alladi and Gordon.

We also give some interpretations of the analytic results in terms of integer partitions.
\end{abstract}

\maketitle

\section{Introduction and Background}

In the present paper we prove some new results on vanishing coefficients in the series expansion of certain infinite $q$-products. These results have the form
that if \begin{equation*}
\frac{(q^{r-tk},q^{mk-(r-tk)};q^{mk})_{\infty}}{(q^{r},q^{mk-r};q^{mk})_{\infty}}=:\sum_{n=0}^{\infty} c_nq^n,
\end{equation*}
(where $k$, $m$ $s$ and $t$ are integers to be defined in more detail below, such that $r=sm+t$) then $c_{kn-rs}$ is always zero.  Some new theorems on integer partitions, which follow from these analytic results, are also given.  Before coming to these new results, we first recall some prior work by previous authors on the topic.

In \cite{RS78},
Richmond and Szekeres proved that if
\begin{equation*}
F(q):=\frac{(q^3,q^5;q^8)_{\infty}}{(q,q^7;q^8)_{\infty}}=:\sum_{m=0}^{\infty}c_m q^m,
\end{equation*}
then $c_{4n+3}$ is always zero.
They also showed that if
\begin{equation*}
\frac{1}{F(q)}=:\sum_{m=0}^{\infty}d_m q^m,
\end{equation*}
then $d_{4n+2}$ is always zero. These results were derived by Richmond and Szekeres from Hardy - Ramanujan - Rademacher expansions they developed of the infinite products. They also conjectured that if
\begin{equation*}
G(q):=\frac{(q^5,q^7;q^{12})_{\infty}}{(q,q^{11};q^{12})_{\infty}}=:\sum_{m=0}^{\infty}a_m q^m,
\end{equation*}
then $a_{6n+5}$ is always zero, and if
\begin{equation*}
\frac{1}{G(q)}=:\sum_{m=0}^{\infty}b_m q^m,
\end{equation*}
then $b_{6n+3}$ is always zero.

In \cite{AB79}, Andrews and Bressoud proved the following general theorem, which generalizes the results of Richmond and Szekeres as special cases.
\begin{theorem}\label{t1}
If $1\leq r <k$ are relatively prime integers of opposite parity and
\begin{equation}\label{eq1}
\frac{(q^r,q^{2k-r};q^{2k})_{\infty}}{(q^{k-r},q^{k+r};q^{2k})_{\infty}}=:\sum_{n=0}^{\infty} \phi_nq^n,
\end{equation}
then $\phi_{kn+r(k-r+1)/2}$ is always zero.
\end{theorem}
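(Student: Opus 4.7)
The plan is to convert the product $f(q)$ into a ratio of bilateral theta series using Jacobi's triple product identity, and then deploy a $k$-th roots-of-unity filter to isolate the forbidden arithmetic progression $n\equiv T\pmod{k}$, where $T=r(k-r+1)/2$. Applying $(z, q/z, q; q)_\infty = \sum_{n\in\mathbb{Z}}(-1)^n z^n q^{n(n-1)/2}$ with $q\mapsto q^{2k}$ and $z=q^r$ for the numerator, $z=q^{k-r}$ for the denominator, yields
\[
f(q) = \frac{(q^r,q^{2k-r};q^{2k})_\infty}{(q^{k-r},q^{k+r};q^{2k})_\infty} = \frac{\sum_n (-1)^n q^{kn^2+(r-k)n}}{\sum_n (-1)^n q^{kn^2-rn}} =: \frac{\Theta_1(q)}{\Theta_2(q)}.
\]
Reducing modulo $k$, the exponents of $\Theta_1$ are $\equiv rn$ and those of $\Theta_2$ are $\equiv -rn$; the hypothesis $\gcd(r,k)=1$ ensures both range uniformly over $\mathbb{Z}/k\mathbb{Z}$ as $n$ varies.

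Next, with $\omega=e^{2\pi i/k}$, form the filtered sum
\[
S(q) := \frac{1}{k}\sum_{j=0}^{k-1} \omega^{-jT} f(\omega^j q) = \sum_{n\equiv T \pmod{k}} \phi_n q^n,
\]
so that the theorem is equivalent to $S(q)\equiv 0$. The substitution $q\mapsto\omega^j q$ leaves every quadratic factor $q^{kn^2}$ intact (because $\omega^{jk}=1$) and merely multiplies the $n$-th summand of $\Theta_i$ by a phase $\omega^{\pm jrn}$. Clearing denominators against $\prod_{j=0}^{k-1}\Theta_2(\omega^j q)$, which collapses via $\prod_j(1-\omega^j x)=1-x^k$ into $\Theta_2(q^k)\cdot(q^{2k};q^{2k})_\infty^{k-1}$, converts the desired vanishing into an algebraic identity among bilateral theta series twisted by $k$-th roots of unity.

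The main obstacle is closing this last identity. My expectation is that a suitable involution $n\leftrightarrow c-n$ on the indices of $\Theta_1$, for a carefully chosen $c=c(r,k)$, sends every exponent in the class $T\pmod{k}$ to a partner of opposite sign, producing termwise cancellation; the opposite-parity hypothesis on $r$ and $k$ (which guarantees $T\in\mathbb{Z}$) should force this involution to be fixed-point free on the forbidden class, while the coprimality ensures that the filter picks out precisely this class. Alternatively, the quintuple product identity applied to $\Theta_1(q)\Theta_2(q^k)$ may deliver the same cancellation in one stroke. Identifying the explicit involution, and verifying that $T=r(k-r+1)/2$ is the unique shift for which no fixed point arises, is where the real content of the proof lies.
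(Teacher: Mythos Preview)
Your plan does not close, and the gap is structural rather than technical. Writing $f(q)=\Theta_1(q)/\Theta_2(q)$ via Jacobi's triple product is fine, but the roots-of-unity filter acts on the \emph{ratio}, not on $\Theta_1$ and $\Theta_2$ separately. After clearing denominators you obtain
\[
k\,S(q)\,\prod_{j=0}^{k-1}\Theta_2(\omega^j q)
=\sum_{j=0}^{k-1}\omega^{-jT}\,\Theta_1(\omega^j q)\prod_{i\ne j}\Theta_2(\omega^i q),
\]
a sum of $k$-fold products of theta series. An involution $n\leftrightarrow c-n$ on the index of $\Theta_1$ can at most show that certain coefficients of $\Theta_1$ itself vanish; it says nothing about the coefficients of $\Theta_1/\Theta_2$, which is what you need. (Indeed, the nonzero coefficients of $\Theta_1$ are $\pm 1$ supported on a sparse quadratic set, so their vanishing along a residue class is neither necessary nor sufficient for the corresponding statement about the ratio.) Your fallback to the quintuple product on $\Theta_1(q)\Theta_2(q^k)$ is likewise a hope rather than an argument: that product is not what appears on the right-hand side above, and there is no evident mechanism to reduce the $k$-term sum of $(k{-}1)$-fold products to a single quintuple-product instance. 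You yourself flag this step as ``where the real content of the proof lies,'' and nothing in the proposal supplies that content.

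By contrast, the proof in the paper (following Andrews--Bressoud, and carried out in detail for the more general Theorem~\ref{t2n}) avoids the ratio problem altogether. Specializing Ramanujan's ${}_1\psi_1$ summation \eqref{ram1} expresses the entire product as a \emph{single} bilateral Lambert-type series
\[
-q^{-tk}\sum_{n=-\infty}^{\infty}\frac{q^{rn}}{1-q^{nmk-tk}}
\]
times a factor supported only on exponents divisible by $k$. One then restricts $n$ to the residue class that produces exponents $\equiv -rs\pmod k$, expands the geometric series, interchanges the order of summation, and observes that the two halves of the bilateral sum cancel termwise. The key advantage is that ${}_1\psi_1$ delivers a series representation of the \emph{quotient} directly, so no denominator-clearing and no multi-theta identity is ever needed.
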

Andrews and Bressoud derived their result from Ramanujan's $_1\psi_1$ summation formula,
\begin{equation}\label{ram1}
\sum_{n=-\infty}^{\infty} \frac{(a;q)_nz^n}{(b;q)_n}=\frac{(b/a,q,az,q/az;q)_{\infty}}{(q/a,b,z,b/az;q)_{\infty}},
\end{equation}
after replacing $q$ with $q^{2k}$, specializing $a$, $b$ and $z$ and employing some $q$-series manipulations.
The cases $(k,r)=(4,3),\,(4,1),\, (6,5)$ and $(6,1)$, respectively, give the two results proved by Richmond and Szekeres, and the two results conjectured by them.

Alladi and Gordon \cite{AG94} prove a yet more general theorem (we modify their notation to state their results  in the same language used elsewhere in the present paper).
\begin{theorem}\label{AGt1}
Let $1<m<k$ and let $(s, km)=1$ with $1\leq s < mk$. Let $r^{*}=(k-1)s$ and $r\equiv r^{*} (\mod m k)$, with $1\leq r<mk$. \\
Put $r'=\lceil \frac{r^{*}}{mk}\rceil (\mod k)$ with $1 \leq r'<k$. Write
\[
\frac{(q^{r},q^{mk-r};q^{mk})_{\infty}}{(q^{s},q^{mk-s};q^{mk})_{\infty}}
=\sum_{n=0}^{\infty}a_n q^n.
\]
Then $a_n=0$ for $n \equiv rr'(\mod k)$.
\end{theorem}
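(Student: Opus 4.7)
My plan is to imitate the Andrews--Bressoud strategy: specialize Ramanujan's $_1\psi_1$ summation \eqref{ram1} to produce a bilateral series representation for the target product, and then extract information about the coefficients in fixed residue classes modulo $k$.

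First, I would replace $q$ in \eqref{ram1} by $q^{mk}$ and set $z=q^s$, $a=q^{r-s}$, $b=q^{mk+r-s}$, so that $az=q^r$, $q^{mk}/(az)=q^{mk-r}$, $b/(az)=q^{mk-s}$, $b/a=q^{mk}$, and $q^{mk}/a=q^{mk-r+s}$. Substituting into \eqref{ram1} and absorbing the factor $(q^{mk+r-s};q^{mk})_\infty=(q^{r-s};q^{mk})_\infty/(1-q^{r-s})$, the product side simplifies to the target ratio times an explicit correction built from $(q^{r-s},q^{mk-r+s};q^{mk})_\infty$ and powers of $(q^{mk};q^{mk})_\infty$. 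Meanwhile the ratio of Pochhammer symbols inside the bilateral series telescopes to $(1-q^{r-s})/(1-q^{r-s+nmk})$, yielding
\[
\frac{(q^r,q^{mk-r};q^{mk})_\infty}{(q^s,q^{mk-s};q^{mk})_\infty}=\frac{(q^{r-s},q^{mk-r+s};q^{mk})_\infty}{(q^{mk};q^{mk})_\infty^2}\sum_{n=-\infty}^{\infty}\frac{q^{sn}}{1-q^{r-s+nmk}}.
\]

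Next, I would apply the Jacobi triple product to rewrite $(q^{r-s},q^{mk-r+s},q^{mk};q^{mk})_\infty$ as a bilateral theta series $\sum_{j}(-1)^{j}q^{(r-s)j+mk\binom{j}{2}}$, and expand each summand $1/(1-q^{r-s+nmk})$ as a geometric series (handled separately for $n\geq 0$ and $n<0$ with the standard sign convention). This casts the target product as an explicit multiple $q$-series whose typical exponent takes the form $(r-s)(j+\ell)+sn+mk\bigl(\binom{j}{2}+n\ell\bigr)$, and whose coefficient of $q^{N}$ is a signed count of admissible triples $(j,n,\ell)$.

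Since $(s,km)=1$, multiplication by $s$ is a bijection on $\mathbb{Z}/k\mathbb{Z}$, so the residue of $N$ modulo $k$ is controlled by $sn+(r-s)(j+\ell)\bmod k$. The concluding step is to exhibit an involution on the triples $(j,n,\ell)$, inherited from the bilateral symmetry $n\mapsto -1-n$ of the $_1\psi_1$ series together with a compensating shift in $j$, that flips the sign of each summand and thereby annihilates every contribution with $N\equiv rr'\pmod k$. The main technical obstacle is verifying that the residue class eliminated by this involution is \emph{exactly} $rr'\bmod k$, and not some other class. This is where the somewhat mysterious definitions $r^{*}=(k-1)s$ and $r'=\lceil r^{*}/(mk)\rceil \pmod k$ prove indispensable: the ceiling function tracks how many $mk$-periods must be re-absorbed when translating between the residue of the exponent in the bilateral sum and the residue in the original product expansion, while the identity $r\equiv (k-1)s\pmod{mk}$ forces the final count to land on $rr'\bmod k$. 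Making this modular bookkeeping explicit is the heart of the proof.
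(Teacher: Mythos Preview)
The paper does not itself prove Theorem~\ref{AGt1}; it is quoted as background from Alladi and Gordon. The natural benchmark for your proposal is therefore the paper's proof of the closely related Theorem~\ref{t2n}.

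Your $_1\psi_1$ specialization is correct and does yield the displayed identity, but the choice $a=q^{r-s}$ is the inefficient one. The hypothesis $r\equiv(k-1)s\pmod{mk}$ forces $r+s\equiv 0\pmod{k}$. Hence if you instead set $z=q^{s}$ and $az=q^{mk-r}$ (so that $a=q^{mk-r-s}$ is a power of $q^{k}$), the auxiliary factors $q^{mk}/a$ and $b$ on the product side of \eqref{ram1} involve only exponents divisible by $k$. The prefactor multiplying the bilateral sum is then a power series in $q^{k}$, and the entire problem reduces to showing that the coefficients of
\[
\sum_{n=-\infty}^{\infty}\frac{q^{sn}}{1-q^{nmk-(r+s)}}
\]
vanish in the required residue class modulo $k$. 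This is exactly the mechanism in the paper's proof of Theorem~\ref{t2n}: restrict $n$ to the relevant residue class modulo $k$, split the bilateral sum into its $n\geq 0$ and $n<0$ halves, expand each as a geometric series, interchange the order of summation, and observe that the two halves cancel. No Jacobi triple product and no involution on triples is required.

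With your substitution, by contrast, the prefactor carries the exponent $r-s\equiv -2s\pmod{k}$, which is nonzero because $\gcd(s,k)=1$ and $k>m>1$ gives $k\geq 3$. You are then forced to expand the prefactor as a theta series and search for a sign-reversing involution on triples $(j,n,\ell)$. You rightly identify this as ``the heart of the proof,'' but you neither specify the involution concretely nor verify that the annihilated class is $rr'\pmod{k}$; the proposal stops exactly where the real work begins. Switching to the substitution above removes the obstacle and aligns the argument with the paper's method.
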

Note that while there is certainly some overlap with our Theorem \ref{t2n} below, the result of Alladi and Gordon in Theorem \ref{AGt1} does not provide any information about vanishing coefficients in the cases where $k<m$ or $k=m$. In contrast, our Theorem \ref{t2n} below has no such restrictions.

Alladi and Gordon \cite{AG94}  also prove a companion theorem to Theorem \ref{AGt1} above.
\begin{theorem}\label{AGt2}
Let $m$, $k$, $s$, $r^{*}$, $r$, $r'$ be as in Theorem \ref{AGt2} with $k$ odd. Write
\[
\frac{(q^{r},q^{mk-r};q^{mk})_{\infty}}{(-q^{s},-q^{mk-s};q^{mk})_{\infty}}
=\sum_{n=0}^{\infty}a'_n q^n.
\]
Then $a'_n=0$ for $n \equiv rr'(\mod k)$.
\end{theorem}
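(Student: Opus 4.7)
The plan is to adapt Andrews and Bressoud's $_1\psi_1$-based method (as in Theorem \ref{t1}) by inserting signs appropriate to the denominator $(-q^s, -q^{mk-s}; q^{mk})_\infty$. In Ramanujan's sum \eqref{ram1}, replace $q$ with $q^{mk}$ and set $a = -q^{r-s}$, $z = -q^{s}$, $b = -q^{mk+r-s}$. Then $az = q^{r}$ and $b/az = -q^{mk-s}$, so the right-hand side of \eqref{ram1} produces the target quotient
\[
\frac{(q^{r}, q^{mk-r}; q^{mk})_{\infty}}{(-q^{s}, -q^{mk-s}; q^{mk})_{\infty}}
\]
multiplied by the explicit auxiliary factor $(q^{mk}, q^{mk}; q^{mk})_{\infty}/(-q^{mk+s-r}, -q^{mk+r-s}; q^{mk})_{\infty}$.

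The choice $b = aq^{mk}$ further simplifies the bilateral sum via the reduction $(a; q^{mk})_{n}/(aq^{mk}; q^{mk})_{n} = (1-a)/(1 - aq^{mkn})$, turning the left-hand side of \eqref{ram1} into a Lambert-type series
\[
(1 + q^{r-s}) \sum_{n \in \mathbb{Z}} \frac{(-q^{s})^{n}}{1 + q^{r-s+mkn}}.
\]
Splitting the sum at $n = 0$ and expanding each denominator as a geometric series (in the direction appropriate to the sign of $r-s+mkn$), then recombining with the auxiliary factors, I would aim to rewrite the target as a signed bilateral double sum $\sum_{(n,j)} \varepsilon(n,j)\, q^{\Phi(n,j)}$, where $\Phi$ is a bilinear-plus-linear form in $(n, j)$ with leading term $mknj$ and linear part $sn + (r-s)j$.

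The remainder of the argument consists in showing that in the residue class $N \equiv rr' \pmod{k}$, the contributions to the coefficient of $q^{N}$ cancel pairwise. Since $\gcd(s, k) = 1$ (which follows from $\gcd(s, mk) = 1$) and $r^{*} = (k-1)s$, the definition of $r'$ singles out $rr' \pmod{k}$ as the arithmetically distinguished residue. The natural candidate for a sign-reversing involution on the pairs $(n, j)$ hitting this class is an affine map approximately swapping $(n, j) \mapsto (j, n)$ with compensating offsets determined by $r$, $s$, $m$, $k$; the bilinear term $mknj$ is symmetric under such a swap, which makes the idea plausible, but the linear terms require careful matching. The main obstacle I foresee is constructing this involution explicitly and verifying that it is fixed-point free precisely on the class $rr' \pmod{k}$. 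The hypothesis that $k$ is odd, which distinguishes Theorem \ref{AGt2} from Theorem \ref{AGt1}, is expected to enter here: oddness is what prevents a would-be fixed point of an otherwise integer-valued affine involution from landing in the offending residue class, so that all contributions cancel and $a'_{N} = 0$.
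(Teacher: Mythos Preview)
First, note that the paper does not supply its own proof of Theorem~\ref{AGt2}: it is quoted from Alladi and Gordon \cite{AG94} as prior work. The nearest thing in the paper is the proof of the companion result Theorem~\ref{t3n}, which defers to the proof of Theorem~\ref{t2n}. There the $_1\psi_1$ substitution is $q\mapsto q^{mk}$, $a=q^{-tk}$, $b=q^{mk-tk}$, $z=q^r$ (with signs inserted for the odd-$k$ companion, using $(-1)^{kn}=(-1)^n$ and $(-1)^{n-s}=(-1)^{nk+s}$). The decisive feature of that choice is that the auxiliary factor $(q^{mk},q^{mk};q^{mk})_\infty/(q^{tk},q^{mk-tk};q^{mk})_\infty$ separating the Lambert series from the target quotient is a power series in $q^k$, so it cannot disturb residue classes modulo $k$. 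The argument then reduces to the Lambert series alone: restrict $n$ to the relevant residues modulo $k$, expand geometrically, interchange the order of summation, and observe that the two resulting single sums coincide term by term. No involution needs to be built.

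Your substitution $a=-q^{r-s}$, $z=-q^s$, $b=aq^{mk}$ lacks this decisive feature. Since $r\equiv(k-1)s\equiv -s\pmod k$, one has $r-s\equiv -2s\pmod k$, which is nonzero because $k$ is odd and $\gcd(s,k)=1$. Hence your auxiliary factor $(q^{mk},q^{mk};q^{mk})_\infty/(-q^{mk+s-r},-q^{mk+r-s};q^{mk})_\infty$ is \emph{not} a series in $q^k$, and dividing it out to isolate the target quotient scrambles residue classes modulo $k$. Even a perfect cancellation argument on your Lambert series would therefore prove vanishing for the wrong product. Your plan to ``recombine with the auxiliary factors'' into a single double sum $\sum\varepsilon(n,j)q^{\Phi(n,j)}$ with bilinear $\Phi$ is not realistic either: that auxiliary factor is itself an infinite product with exponents off $k\mathbb Z$ and does not collapse to such a form. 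Beyond this structural obstacle, the core step of your proposal --- the sign-reversing involution --- is only conjectured, not constructed. If you wish to salvage the $_1\psi_1$ route, choose $a$ to be (up to sign) a power of $q^k$; the congruence $r+s\equiv 0\pmod k$ in the Alladi--Gordon parameterization indicates how, and then the paper's interchange-of-summation method applies directly without any involution.
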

 We also prove a companion theorem to our  Theorem \ref{t2n}, namely  Theorem \ref{t3n} below, which is similar in nature to Theorem \ref{AGt2} of Alladi and Gordon, but as with Theorem \ref{t2n}, our result is not restricted to $k>m$, as is the case in their theorem.

 \section{Main Results}

In the present paper our main result,  in Theorem \ref{t2n} below, is in part a reformulation of Theorem \ref{AGt1} of Alladi and Gordon \cite{AG94}, but also extends to cases not covered by Theorem \ref{AGt1}. The proof of Theorem \ref{t2n} also uses Ramanujan's $_1\psi_1$ summation formula.


\begin{theorem}\label{t2n}
Let $k>1$, $m>1$ be positive integers. Let $r=sm+t$, for some integers $s$ and $t$, where $0\leq s<k$, $1\leq t <m$ and $r$ and $k$ are relatively prime. Let
\begin{equation}\label{eq2n}
\frac{(q^{r-tk},q^{mk-(r-tk)};q^{mk})_{\infty}}{(q^{r},q^{mk-r};q^{mk})_{\infty}}=:\sum_{n=0}^{\infty} c_nq^n,
\end{equation}
then $c_{kn-rs}$ is always zero.
\end{theorem}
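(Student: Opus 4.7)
The plan is to apply Ramanujan's $_1\psi_1$ summation (\ref{ram1}) after the substitution $q\mapsto q^{mk}$ with $a=q^{-tk}$, $b=q^{(m-t)k}$, $z=q^{r}$. A brief check of the five quotients $b/a=q^{mk}$, $q^{mk}/a=q^{(m+t)k}$, $az=q^{r-tk}$, $q^{mk}/(az)=q^{mk-(r-tk)}$, $b/(az)=q^{mk-r}$ shows that the infinite product on the right-hand side of (\ref{ram1}) factorises as the target ratio (\ref{eq2n}) multiplied by
\[
H\;:=\;\frac{(q^{(m+t)k},q^{(m-t)k};q^{mk})_{\infty}}{(q^{mk};q^{mk})_{\infty}^{2}},
\]
and every factor appearing in $H$ is a power series in $q^{k}$, so $H$ itself lies in $\mathbb{Q}[[q^{k}]]$.

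Next I would exploit the telescoping identity
\[
\frac{(q^{-tk};q^{mk})_{n}}{(q^{(m-t)k};q^{mk})_{n}}=\frac{1-q^{-tk}}{1-q^{(nm-t)k}}\qquad(n\in\mathbb{Z}),
\]
whose right-hand side is a Laurent series in $q^{k}$. Hence the $n$-th summand of the bilateral $_{1}\psi_{1}$ sum contributes only to exponents $\equiv rn\pmod{k}$. Because $\gcd(r,k)=1$ and multiplication by $H$ preserves residues mod $k$, the coefficients of (\ref{eq2n}) at exponents $\equiv -rs\pmod{k}$ come entirely from those summands with $n\equiv -s\pmod{k}$. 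Setting $n=jk-s$, using $r=sm+t$ to simplify $((jk-s)m-t)k$ to $(jkm-r)k$, and pulling out common factors, this subseries becomes
\[
q^{-rs}(1-q^{-tk})\,H\,S,\qquad S\;:=\;\sum_{j\in\mathbb{Z}}\frac{q^{rjk}}{1-q^{(jkm-r)k}},
\]
so the theorem reduces to proving $S=0$.

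To accomplish this I would set $A=rk$ and $B=mk^{2}$ (so $0<A<B$, since $r<mk$) and split $S$ at $j\geq 1$ versus $j\leq 0$. For $j\geq 1$ the geometric series gives $\sum_{j\geq 1,\,l\geq 0}q^{Aj+l(jB-A)}$. For $j\leq 0$, applying $\frac{1}{1-q^{-M}}=-q^{M}/(1-q^{M})$ with $M=A-jB>0$ and setting $j=-i$ for $i\geq 0$ produces $-\sum_{i\geq 0,\,l\geq 1}q^{lA+i(lB-A)}$, and the dummy relabelling $(l,i)\leftrightarrow(j,l)$ turns this expression into the termwise negative of the first double sum, yielding $S=0$.

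The anticipated obstacle is bookkeeping rather than substance: one must check that at every stage each coefficient of $q^{n}$ receives only finitely many contributions (so the formal rearrangements are legitimate), and when $r-tk<0$ one has to view (\ref{eq2n}) as a formal Laurent series in $q$ so that the identity still makes sense. Beyond those points, the argument uses only Ramanujan's $_{1}\psi_{1}$ and the geometric series, and it treats the regime $k\leq m$---which lies outside the range of Theorem \ref{AGt1}---on exactly the same footing as $k>m$.
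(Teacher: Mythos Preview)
Your proposal is correct and follows essentially the same route as the paper: the identical specialization of Ramanujan's $_1\psi_1$, the same telescoping of $(a;q^{mk})_n/(b;q^{mk})_n$ to $(1-q^{-tk})/(1-q^{(nm-t)k})$, extraction of the residue class $n\equiv -s\pmod k$, and the same geometric-series/reindexing trick to show the resulting bilateral sum vanishes. (Your displayed $H$ is actually the reciprocal of the extra factor, and the $H$ appearing in ``$q^{-rs}(1-q^{-tk})\,H\,S$'' is spurious, but neither slip affects the argument since all you use is that this factor lies in $\mathbb{Q}[[q^{k}]]$.)
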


\begin{proof}
In Ramanujan's $_1\psi_1$ summation formula \eqref{ram1},
replace $q$ with $q^{mk}$, $a$ with $q^{-tk}$, $b$ with $q^{mk-tk}$ and $z$ with $q^r$ (note that these choices satisfy the requirements needed for the series to converge, namely $|b/a|<|z|<1$, since $r<mk$). This gives, after a little simplification
\begin{equation}\label{ram2n}
-q^{-tk}\sum_{n=-\infty}^{\infty} \frac{q^{rn}}{1-q^{nmk-tk}}=\frac{(q^{mk},q^{mk};q^{mk})_{\infty}}
{(q^{tk},q^{mk-tk};q^{mk})_{\infty}}\frac{(q^{r-tk},q^{mk-(r-tk)};q^{mk})_{\infty}}
{(q^r,q^{mk-r};q^{mk})_{\infty}}.
\end{equation}
It is clear that to prove the result, all that is necessary is to show that if we expand
\begin{equation}\label{ram3n}
\sum_{n=-\infty}^{\infty} \frac{q^{rn}}{1-q^{nmk-tk}}=\sum_{n=0}^{\infty} \frac{q^{rn}}{1-q^{nmk-tk}}-\sum_{n=1}^{\infty} \frac{q^{nmk+tk-rn}}{1-q^{nmk+tk}} =:\sum_{n=o}^{\infty} d_nq^n,
\end{equation}
then $d_{kn-rs}=0$ for all $n$. Thus we just need to consider those $n$ that lead to powers of $q$ of the form $q^{kn-rs}$ in the two sums in the middle expression just above, and thus all that is necessary is to show that
\begin{equation}\label{ram4n}
\sum_{n=1}^{\infty} \frac{q^{r(nk-s)}}{1-q^{(nk-s)mk-tk}}-\sum_{n=0}^{\infty} \frac{q^{(nk+s)mk+tk-r(nk+s)}}{1-q^{(nk+s)mk+tk}} =0.
\end{equation}
Note that if $s=0$, then the first sum should start at $n=0$ and the
second sum should start at $n=1$, but it can be seen that if $s=0$,
then the term corresponding to $n=0$ in the first series is
$1/(1-q^{-tk})=-q^{tk}/(1-q^{tk})$, while the term corresponding to $n=0$
in the second series is $q^{tk}/(1-q^{tk})$ when $s=0$. Thus the assertion
that all that is necessary to prove the result is to show that
\eqref{ram4n}, also holds when $s=0$.
\begin{align*}
\sum_{n=1}^{\infty} \frac{q^{rnk-rs}}{1-q^{nmk^2-smk-tk}}&=\sum_{n=1}^{\infty}q^{rnk-rs}
\sum_{p=0}^{\infty}q^{p(nmk^2-smk-tk)}\\
&=\sum_{p=0}^{\infty}q^{p(-smk-tk)-rs}\sum_{n=1}^{\infty}q^{n(pmk^2+ rk)}\\
&=\sum_{p=0}^{\infty}\frac{q^{p(-smk-tk+mk^2)+ rk-rs}}{1-q^{pmk^2+ rk}}\\
&=\sum_{n=0}^{\infty}\frac{q^{n(-(sm+t)k+mk^2)+ rk-rs}}{1-q^{pmk^2+ rk}}.
\end{align*}
We now use the fact that $r=sm+t$, which easily implies  that the last series above and the second series at \eqref{ram4n} are identical, giving the result.
\end{proof}

Remark: It may happen $r<tk$, in which case it will be necessary to
use the identity
\[
\frac{(q^{r-tk},q^{mk-(r-tk)};q^{mk})_{\infty}}{(q^{r},q^{mk-r};q^{mk})_{\infty}}=
\frac{-1}{q^{tk-r}}\frac{(q^{mk-(tk-r)},q^{tk-r};q^{mk})_{\infty}}{(q^{r},q^{mk-r};q^{mk})_{\infty}}
\]
if it is desired that all the exponents in the infinite products be
positive.

We give the following example as an illustration of the result in
Theorem \ref{t2n}, and also to highlight the differences between this
result and that of Andrews and Bressoud in Theorem \ref{t1}. In each
case $mk=30$, $r=t=1$ and $s=0$, so that Theorem \ref{t2n} gives that
$c_{kn}=0$ for all $n$. However, since $r-k<0$ in each case, we
modify the infinite products as described above, so that the
progressions containing zero coefficients are thus shifted.
\begin{corollary}
a) Let
\[
\frac{(q^2,q^{28};q^{30})_{\infty}}{(q,q^{29};q^{30})_{\infty}}=\sum_{n=0}^{\infty}c_nq^n.
\]
Then $c_{3n+2}=0$ for all $n\geq 0$. (Here $k=3$, so $r-k=-2$.)

b) Let
\[
\frac{(q^4,q^{26};q^{30})_{\infty}}{(q,q^{29};q^{30})_{\infty}}=\sum_{n=0}^{\infty}c_nq^n.
\]
Then $c_{5n+4}=0$ for all $n\geq 0$. (Here $k=5$, so $r-k=-4$.)

c)  Let
\[
\frac{(q^5,q^{25};q^{30})_{\infty}}{(q,q^{29};q^{30})_{\infty}}=\sum_{n=0}^{\infty}c_nq^n.
\]
Then $c_{6n+5}=0$ for all $n\geq 0$. (Here $k=6$, so $r-k=-5$.)

d) Let
\[
\frac{(q^9,q^{21};q^{30})_{\infty}}{(q,q^{29};q^{30})_{\infty}}=\sum_{n=0}^{\infty}c_nq^n.
\]
Then $c_{10n+9}=0$ for all $n\geq 0$. (Here $k=10$, so $r-k=-9$.)

e) Let
\[
\frac{(q^{14},q^{16};q^{30})_{\infty}}{(q,q^{29};q^{30})_{\infty}}=\sum_{n=0}^{\infty}c_nq^n.
\]
Then $c_{15n+14}=0$ for all $n\geq 0$. (Here $k=15$, so $r-k=-14$.)

\end{corollary}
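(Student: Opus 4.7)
The plan is to derive each of parts (a)--(e) as a direct specialization of Theorem \ref{t2n}, followed by a single use of the identity in the remark. In all five parts I would take the common parameter choices $mk=30$, $r=t=1$, $s=0$, with $(k,m)$ equal to $(3,10)$, $(5,6)$, $(6,5)$, $(10,3)$, $(15,2)$ in parts (a)--(e) respectively. First I would verify the hypotheses of Theorem \ref{t2n}: in every case $k>1$, $m>1$, $0\leq s=0<k$, $1\leq t=1<m$, $r=sm+t=1$, and $\gcd(r,k)=\gcd(1,k)=1$.

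With the hypotheses checked, Theorem \ref{t2n} applied to these parameters asserts that if one writes
\begin{equation*}
\frac{(q^{1-k},q^{30-(1-k)};q^{30})_{\infty}}{(q,q^{29};q^{30})_{\infty}}=:\sum_{n=0}^{\infty}\tilde c_n\,q^n,
\end{equation*}
then $\tilde c_{kn-rs}=\tilde c_{kn}=0$ for every $n\geq 0$. Since $r-tk=1-k<0$ in each case, I would next invoke the identity from the remark following Theorem \ref{t2n} with $tk-r=k-1$ to convert to positive exponents:
\begin{equation*}
\frac{(q^{1-k},q^{30-(1-k)};q^{30})_{\infty}}{(q,q^{29};q^{30})_{\infty}}
=\frac{-1}{q^{k-1}}\,\frac{(q^{30-(k-1)},q^{k-1};q^{30})_{\infty}}{(q,q^{29};q^{30})_{\infty}}.
\end{equation*}
Substituting $k=3,5,6,10,15$ recovers exactly the five infinite products in parts (a)--(e) on the right-hand side.

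Finally, writing $\sum_{n\geq 0}c_n q^n$ for the product in any given part, the displayed identity gives $\sum c_n q^n=-q^{k-1}\sum \tilde c_n q^n$, so $c_n=-\tilde c_{n-(k-1)}$ and therefore $c_n=0$ whenever $n-(k-1)\equiv 0\pmod{k}$, i.e.\ whenever $n\equiv k-1\pmod{k}$. This yields the progressions $c_{3n+2}=0$, $c_{5n+4}=0$, $c_{6n+5}=0$, $c_{10n+9}=0$, $c_{15n+14}=0$, as claimed.

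The only real obstacle is careful bookkeeping: one must correctly match the arithmetic shift $k-1$ introduced by the rewriting identity against the progression $kn-rs=kn$ produced by Theorem \ref{t2n}, and check that the specific $(k,m)$ pairs attached to each part indeed satisfy $mk=30$ and the inequalities $1\leq t<m$. Once that tracking is in place, each part is immediate.
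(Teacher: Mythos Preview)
Your proof is correct and follows exactly the approach the paper itself uses: the paper states just before the corollary that ``in each case $mk=30$, $r=t=1$ and $s=0$, so that Theorem \ref{t2n} gives that $c_{kn}=0$ for all $n$,'' and then invokes the rewriting identity in the remark to shift the progression because $r-k<0$. Your verification of the hypotheses, the $(k,m)$ pairs, and the shift arithmetic is precisely this argument carried out in full detail.
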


Remark: The identity at e) is also given by Theorem \ref{t1} of
Andrews and Bressoud ($k=15$ and $r=14$ in their theorem), but none
of the identities a) - d) above follow from their theorem. Similarly, parts c), d), and e) are given by Theorem \ref{AGt1} of Alladi and Gordon, but not parts a) and b).

We also give the following result with  to further illustrate  the difference between Theorem \ref{t2n}
and Theorem \ref{AGt1} of Alladi and Gordon (which does not imply the result in Corollary \ref{t2nc2}, since $k=m$). In each case in the corollary below, $k=m=3$.
\begin{corollary}\label{t2nc2}
a) Let
\[
\frac{(q,q^{8};q^{9})_{\infty}}{(q^{4},q^{5};q^{9})_{\infty}}=\sum_{n=0}^{\infty}c_nq^n.
\]
Then $c_{3n+2}=0$ for all $n\geq 0$. (Take $s=t=1$, so $r=1(3)+1=4$, $r-tk=4-1(3)=1$ and $-rs\equiv 2(\mod 3)$.)

b) Let
\[
\frac{(q^2,q^{7};q^{9})_{\infty}}{(q^{},q^{8};q^{9})_{\infty}}=\sum_{n=0}^{\infty}c_nq^n.
\]
Then $c_{3n+2}=0$ for all $n\geq 0$. (Take $s=t=2$, so $r=2(3)+2=8$, $r-tk=8-2(3)=2$ and $-rs \equiv 2 (\mod 3)$.)

c)  Let
\[
\frac{(q^4,q^{5};q^{9})_{\infty}}{(q^{2},q^{7};q^{9})_{\infty}}=\sum_{n=0}^{\infty}c_nq^n.
\]
Then $c_{3n+1}=0$ for all $n\geq 0$. (Take $s=2$, $t=1$, so $r=2(3)+1=7$, $r-tk=7-1(3)=4$ and $-rs \equiv 1 (\mod 3)$.)

\end{corollary}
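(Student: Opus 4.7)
The plan is to observe that Corollary \ref{t2nc2} is a direct specialization of Theorem \ref{t2n} in each of the three cases, so the task reduces to (i) verifying the hypotheses of Theorem \ref{t2n}, (ii) checking that substituting the chosen $k$, $m$, $s$, $t$ into \eqref{eq2n} produces the stated infinite product, and (iii) translating the conclusion ``$c_{kn-rs}=0$'' into the stated arithmetic progression $c_{3n+j}=0$ modulo $k=3$.

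More concretely, for each of (a), (b), (c) I would first check the numerical conditions of Theorem \ref{t2n}: $k=m=3>1$; the prescribed $s\in\{0,1,2\}$ satisfies $0\le s<k$; the prescribed $t\in\{1,2\}$ satisfies $1\le t<m$; and $r=sm+t$ is coprime to $k=3$. (The three pairs $(s,t)=(1,1),(2,2),(2,1)$ give $r=4,8,7$, all coprime to $3$.) Next I would compute $r-tk$ and $mk-(r-tk)$ and verify that, after possibly using the symmetry $(q^a,q^{mk-a};q^{mk})_{\infty}=(q^{mk-a},q^a;q^{mk})_{\infty}$ in both numerator and denominator to match the way the products are written in the corollary, \eqref{eq2n} becomes exactly the stated infinite product. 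For instance in (a): $r-tk=1$ and $mk-(r-tk)=8$, while $r=4$ and $mk-r=5$, giving $(q,q^{8};q^{9})_{\infty}/(q^{4},q^{5};q^{9})_{\infty}$. In (b) and (c) the computation is the same kind, modulo swapping $q^r$ and $q^{mk-r}$ inside each Pochhammer pair.

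Finally I would reduce $-rs$ modulo $k=3$ to put the vanishing progression in the stated form: in (a), $-rs=-4\equiv 2\pmod{3}$; in (b), $-rs=-16\equiv 2\pmod{3}$; in (c), $-rs=-14\equiv 1\pmod{3}$. Since Theorem \ref{t2n} gives $c_{kn-rs}=0$ for all integers $n$ making $kn-rs\ge 0$, and since the residue class mod $k$ of $kn-rs$ is precisely the residue of $-rs$, these translate to $c_{3n+2}=0$, $c_{3n+2}=0$, and $c_{3n+1}=0$, respectively.

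There is no real obstacle here; the only thing to be careful about is the cosmetic reordering of the Pochhammer factors in parts (b) and (c) so that the product matches what is displayed in the corollary, and the arithmetic of converting $-rs \pmod 3$ into the advertised residue. Note that since in all three cases $r-tk>0$, the Remark following Theorem \ref{t2n} is not needed.
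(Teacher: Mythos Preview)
Your proposal is correct and follows exactly the paper's own approach: the corollary has no separate proof in the paper, the parenthetical remarks in the statement \emph{are} the proof, and they amount precisely to the specialization of Theorem~\ref{t2n} with $k=m=3$ and the indicated choices of $s,t$ that you spell out. The only cosmetic point you flag---reordering the Pochhammer pair in the denominator in parts (b) and (c)---is handled implicitly in the paper as well.
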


There is also a companion result to Theorem \ref{t2n}, in the same way that Alladi and Gordon's Theorems \ref{AGt1} and \ref{AGt2} are companions. However, in contrast to Theorem \ref{AGt2}, our Theorem \ref{t3n} does not have the restriction that $m<k$.

\begin{theorem}\label{t3n}
Let $k>1$, $m>1$ be positive integers, with $k$ odd. Let $r=sm+t$, for some integers $s$ and $t$, where $0\leq s<k$, $1\leq t <m$ and $r$ and $k$ are relatively prime. Let
\begin{equation}\label{eq3n}
\frac{(q^{r-tk},q^{mk-(r-tk)};q^{mk})_{\infty}}{(-q^{r},-q^{mk-r};q^{mk})_{\infty}}=:\sum_{n=0}^{\infty} d_nq^n,
\end{equation}
then $d_{kn-rs}$ is always zero.
\end{theorem}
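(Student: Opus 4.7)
My approach mirrors the proof of Theorem \ref{t2n}, but with sign twists on the parameters of Ramanujan's ${}_1\psi_1$ summation formula to introduce the minus signs in the denominator of \eqref{eq3n}. Specifically, in \eqref{ram1} I would replace $q$ by $q^{mk}$, $a$ by $-q^{-tk}$, $b$ by $-q^{mk-tk}$, and $z$ by $-q^r$. A direct check gives $b/a=q^{mk}$, $az=q^{r-tk}$, $q^{mk}/(az)=q^{mk-(r-tk)}$, $q^{mk}/a=-q^{mk+tk}$, and $b/(az)=-q^{mk-r}$, so that (after pulling the factor $1+q^{tk}$ out of $(-q^{mk+tk};q^{mk})_\infty$) the right-hand side of \eqref{ram1} becomes $(1+q^{tk})$ times
\begin{equation*}
\frac{(q^{mk},q^{mk},q^{r-tk},q^{mk-(r-tk)};q^{mk})_\infty}{(-q^{tk},-q^{mk-tk},-q^{r},-q^{mk-r};q^{mk})_\infty}.
\end{equation*}
Collapsing the shifted Pochhammer quotients on the left (treating $n\geq 0$ and $n<0$ separately, the latter via $(a;q)_{-N}=(-q/a)^N q^{\binom{N}{2}}/(q/a;q)_N$) then yields a companion of \eqref{ram2n} whose left-hand side is
\begin{equation*}
q^{-tk}\sum_{n=-\infty}^{\infty}\frac{(-1)^n q^{rn}}{1+q^{nmk-tk}}.
\end{equation*}

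Because every exponent appearing in $(1+q^{tk})\cdot\frac{(q^{mk},q^{mk};q^{mk})_\infty}{(-q^{tk},-q^{mk-tk};q^{mk})_\infty}$ is divisible by $k$, and the factor $q^{-tk}$ shifts exponents by a multiple of $k$ as well, this identity preserves residues of exponents modulo $k$; consequently it suffices to show that the bilateral sum above has vanishing coefficient of $q^j$ for every $j\equiv -rs\pmod k$. Expanding $1/(1+q^{nmk-tk})$, together with its negative-index counterpart $q^{Nmk+tk}/(1+q^{Nmk+tk})$ for $n=-N<0$, as alternating geometric series in $q^k$, I would identify the only contributions with exponent $\equiv -rs\pmod k$ as those coming from $n=kn'-s$ with $n'\geq 1$ and from $n=-(kn'+s)$ with $n'\geq 0$.

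The residue-$(-rs)$ portion of the bilateral sum then reduces to the analog of \eqref{ram4n},
\begin{equation*}
\sum_{n'=1}^{\infty}\frac{(-1)^{kn'-s}q^{r(kn'-s)}}{1+q^{(kn'-s)mk-tk}}+\sum_{n'=0}^{\infty}\frac{(-1)^{kn'+s}q^{(kn'+s)(mk-r)+tk}}{1+q^{(kn'+s)mk+tk}},
\end{equation*}
and this is where the hypothesis that $k$ is odd becomes essential: it gives $(-1)^{kn'}=(-1)^{n'}$, so each sum becomes alternating in $n'$. Expanding the first sum as a double series in $(n',p)$, swapping the order of summation, summing the resulting alternating geometric series in $n'$ (which yields $-q^{rk+pmk^2}/(1+q^{rk+pmk^2})$, precisely matching the $1+q^{\cdots}$ denominator of the second sum, rather than the $1-q^{\cdots}$ that would appear if $k$ were even), and substituting $r=sm+t$ produces exactly the negative of the second sum. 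I expect the only real technical wrinkle to be the boundary $s=0$ case, in which the $n=0$ term of the bilateral sum has to be reassigned between the two pieces; this is handled by the same bookkeeping given in the remark immediately after \eqref{ram4n} in the proof of Theorem \ref{t2n}.
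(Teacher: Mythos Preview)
Your proposal is correct and follows essentially the same route as the paper, which simply states that the argument of Theorem~\ref{t2n} goes through verbatim once one uses the parity facts $(-1)^{kn}=(-1)^n$ and $(-1)^{n-s}=(-1)^{nk+s}$ for $k$ odd. You have merely supplied the specific ${}_1\psi_1$ substitution ($a=-q^{-tk}$, $b=-q^{mk-tk}$, $z=-q^r$) and the sign bookkeeping that the paper leaves implicit; the double-series swap and the cancellation against the second sum are identical to the paper's intended argument.
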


\begin{proof}
The argument is essentially the same as that used in the proof of Theorem \ref{t2n}, so details are omitted. The only additional facts needed are that if $k$ is odd, then $(-1)^{kn}=(-1)^n$, and $(-1)^{n-s}=(-1)^{nk+s}$.
\end{proof}

\begin{corollary}\label{t3nc2}
a) Let
\[
\frac{(q,q^{8};q^{9})_{\infty}}{(-q^{4},-q^{5};q^{9})_{\infty}}=\sum_{n=0}^{\infty}c'_nq^n.
\]
Then $c'_{3n+2}=0$ for all $n\geq 0$.

b) Let
\[
\frac{(q^2,q^{7};q^{9})_{\infty}}{(-q^{},-q^{8};q^{9})_{\infty}}=\sum_{n=0}^{\infty}c'_nq^n.
\]
Then $c'_{3n+2}=0$ for all $n\geq 0$.

c)  Let
\[
\frac{(q^4,q^{5};q^{9})_{\infty}}{(-q^{2},-q^{7};q^{9})_{\infty}}=\sum_{n=0}^{\infty}c'_nq^n.
\]
Then $c'_{3n+1}=0$ for all $n\geq 0$.
\end{corollary}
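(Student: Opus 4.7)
The plan is to deduce all three parts of Corollary \ref{t3nc2} as direct specializations of Theorem \ref{t3n}, using the same triples $(s,t,r)$ already worked out in Corollary \ref{t2nc2}. In each case I take $k=m=3$; since $k=3$ is odd, the parity hypothesis of Theorem \ref{t3n} is satisfied, and one only has to check that the remaining numerical hypotheses $0\leq s<k$, $1\leq t<m$, $\gcd(r,k)=1$ hold and that the infinite product in \eqref{eq3n} matches the one written in the corollary.

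For part (a) I set $s=1$, $t=1$, so $r=sm+t=4$. Then $r-tk=4-3=1$, $mk-(r-tk)=8$, and $mk-r=5$, so the numerator and denominator of \eqref{eq3n} become $(q,q^8;q^9)_\infty$ and $(-q^4,-q^5;q^9)_\infty$, matching the stated product. Since $\gcd(4,3)=1$, Theorem \ref{t3n} gives $c'_{kn-rs}=c'_{3n-4}=0$, and reducing $-4\equiv 2\pmod{3}$ gives $c'_{3n+2}=0$.

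For part (b) I set $s=2$, $t=2$, so $r=2\cdot 3+2=8$. Then $r-tk=8-6=2$ and $mk-(r-tk)=7$, so the numerator and denominator are $(q^2,q^7;q^9)_\infty$ and $(-q,-q^8;q^9)_\infty$, as stated; also $\gcd(8,3)=1$. Theorem \ref{t3n} yields $c'_{3n-16}=0$, and $-16\equiv 2\pmod{3}$ gives $c'_{3n+2}=0$. For part (c) I set $s=2$, $t=1$, so $r=7$, $r-tk=4$, $mk-(r-tk)=5$, $mk-r=2$; the product becomes $(q^4,q^5;q^9)_\infty/(-q^2,-q^7;q^9)_\infty$, and $\gcd(7,3)=1$. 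Theorem \ref{t3n} yields $c'_{3n-14}=0$, and $-14\equiv 1\pmod{3}$ gives $c'_{3n+1}=0$.

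There is essentially no obstacle here once Theorem \ref{t3n} is granted: the whole task reduces to picking parameters and reducing $-rs$ modulo $k=3$. The only point that requires a sentence of comment is that none of these corollaries would be accessible via Theorem \ref{AGt2} of Alladi and Gordon, since Theorem \ref{AGt2} requires $k>m$ while here $k=m$; this is the same observation that motivated Corollary \ref{t2nc2}, and it is exactly the generality provided by dropping the $k>m$ restriction in our Theorem \ref{t3n} that makes the three identities simultaneously available.
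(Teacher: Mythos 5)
Your proposal is correct and is exactly the paper's proof: the author likewise deduces all three parts by setting $k=m=3$ in Theorem \ref{t3n} with the same $(s,t)$ values as in Corollary \ref{t2nc2}, and your parameter checks and reductions of $-rs$ modulo $3$ all verify. You have merely written out explicitly the arithmetic the paper leaves implicit.
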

\begin{proof}
Let $k=m=3$ in Theorem \ref{t3n}, and let $s$ and $t$ have the same values as in the corresponding parts of Corollary \ref{t2nc2}.
\end{proof}

\section{Partition Implications}

Theorems \ref{t2n} and \ref{t3n} also have implications for  certain types of restricted partitions.
\begin{theorem}
Let $k>1$, $m>1$ be positive integers. Let $r=sm+t$, for some integers $s$ and $t$, where $0\leq s<k$, $1\leq t<m$, and $r$ and $k$ are relatively prime.
Let $p_{m,k,r}(n)$ denote the number of partitions of $n$ into parts $\equiv 0, \pm r (\mod m k)$. Then for each integer $n$,
\begin{equation*}
\sum_{j}
(-1)^jp_{m,k,r}(n k - r s - m k j (j + 1)/2 - j (tk - r))
=0,
\end{equation*}
where the sum is over those $j$ with $n k - r s - m k j (j + 1)/2 - j (tk - r) \geq 0$.
\end{theorem}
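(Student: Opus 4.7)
My plan is to derive the stated partition identity from Theorem \ref{t2n} by expanding a suitable theta factor via Jacobi's triple product and then extracting coefficients.

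The first ingredient is the elementary observation that
\[
\sum_{n\geq 0} p_{m,k,r}(n)\,q^n = \frac{1}{(q^r,\,q^{mk-r},\,q^{mk};\,q^{mk})_\infty},
\]
which follows directly from the definition of $p_{m,k,r}$. Multiplying numerator and denominator of the right-hand side of \eqref{eq2n} by $(q^{mk};q^{mk})_\infty$ then rewrites Theorem \ref{t2n} as
\[
\sum_{n\geq 0} c_n\,q^n = (q^{r-tk},\,q^{mk-(r-tk)},\,q^{mk};\,q^{mk})_\infty \cdot \sum_{n\geq 0} p_{m,k,r}(n)\,q^n.
\]

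The next step is to expand the triple product using Jacobi's triple product identity, with base $q^{mk}$ and variable $q^{r-tk}$, as the theta series $\sum_j (-1)^j q^{mk\,j(j-1)/2 + (r-tk)j}$. Convolving with the $p_{m,k,r}$ generating function and equating the coefficient of $q^N$ on each side yields
\[
c_N = \sum_{j\in\mathbb{Z}} (-1)^j \, p_{m,k,r}\!\bigl(N - mk\,j(j-1)/2 - (r-tk)\,j\bigr),
\]
the sum ranging over those $j$ for which the argument is nonnegative. Setting $N = kn-rs$ and invoking the vanishing $c_{kn-rs}=0$ from Theorem \ref{t2n} forces the right-hand side to be zero. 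A final substitution $j \mapsto -j$ (which turns $j(j-1)/2$ into $j(j+1)/2$ and flips the sign of the linear term, while preserving $(-1)^j$) converts the resulting identity into the precise shape stated.

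I do not anticipate any serious obstacle; the argument is essentially routine bookkeeping. The one point that deserves care is the case $r<tk$, where \eqref{eq2n} is naturally a formal Laurent series carrying an overall $q^{-(tk-r)}$ factor (as noted in the Remark after Theorem \ref{t2n}). Jacobi's triple product still holds as an identity of formal Laurent series, so coefficient extraction and the vanishing of $c_{kn-rs}$ go through unchanged (alternatively, one may apply the Remark's rewrite first and carry out the same argument). The main ``hard part'' is thus just confirming that the index shift $j\mapsto -j$ matches the theorem's precise form.
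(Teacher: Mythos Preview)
Your proposal is correct and follows essentially the same approach as the paper: multiply top and bottom by $(q^{mk};q^{mk})_\infty$, expand the numerator by Jacobi's triple product and the denominator as the generating function for $p_{m,k,r}$, then extract the coefficient of $q^{kn-rs}$ and invoke Theorem~\ref{t2n}. The only cosmetic difference is that the paper applies the triple product in the form $\sum_j (-q^{tk-r})^j q^{mkj(j+1)/2}$ directly, whereas you use the $j(j-1)/2$ form and then reindex via $j\mapsto -j$ at the end; these are of course equivalent.
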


\begin{proof}
The coefficient of $q^{nk-rs}$ in
\begin{equation*}
\frac{(q^{r-tk},q^{mk-(r-tk)};q^{mk})_{\infty}}{(q^{r},q^{mk-r};q^{mk})_{\infty}}
=\frac{(q^{r-tk},q^{mk-(r-tk)},q^{mk};q^{mk})_{\infty}}{(q^{r},q^{mk-r},q^{mk};q^{mk})_{\infty}}
\end{equation*}
is zero for all $n$, by the theorem.
However
\begin{multline*}
\frac{(q^{r-tk},q^{mk-(r-tk)},q^{mk};q^{mk})_{\infty}}{(q^{r},q^{mk-r},q^{mk};q^{mk})_{\infty}}\\
=\sum_{j\in\mathbb{Z}}q^{mkj(j+1)/2}(-q^{tk-r})^j\sum_{i\geq 0}p_{m,k,r}(i)q^i\\
=\sum_{N}q^N\sum_{mkj(j+1)/2+(tk-r)j+i=N}
(-1)^jq^{mkj(j+1)/2+(tk-r)j+i}p_{m,k,r}(i)
\end{multline*}
The result now follows upon setting $N=nk-rs$ and solving for $i$.
\end{proof}
As an example, take $k=15$, $m=2$, $s=0$ and $t=1$ (so $r=1$) and $n=20$, so that
\[
n k - r s - m k j (j + 1)/2 - j (tk - r)=300-15j^2-29j.
\]
\begin{table}[ht]
\begin{center}    \begin{tabular}{| c | c | c |}    \hline
$j$ & $n_j=300-15j^2-29j$ & $(-1)^j p_{2,15,1}(n_j)$\\
\hline
-5&70 &-13 \\
-4&176&203\\
-3&252&-1654\\
-2&298&3838\\
-1&314&-5773\\
0&300&4673\\
1&256&-1654\\
2&182&393\\
3&78&-13\\\hline
& & $\sum =0$\\
\hline
\end{tabular}
 \end{center}\end{table}

Theorem \ref{t3n} similarly has an interpretation in terms   of certain restricted partition functions.
\begin{theorem}
Let $k>1$, $m>1$ be positive integers, with $k$ odd. Let $r=sm+t$, for some integers $s$ and $t$, where $0\leq s<k$, $1\leq t<m$, and $r$ and $k$ are relatively prime.

Let $p^{e}_{m,k,s,t}(n)$ denote the number of partitions of $n$ into parts (possibly repeating)  $\equiv  \pm r (\mod m k)$ and distinct parts $\equiv  \pm (r-tk) (\mod m k)$, where the total number of parts, counting multiplicities, is even.

Let $p^{o}_{m,k,s,t}(n)$ denote the number of partitions of $n$ into parts (possibly repeating)  $\equiv  \pm r (\mod m k)$ and distinct parts $\equiv  \pm (r-tk) (\mod m k)$, where the total number of parts, counting multiplicities, is odd.

a) If $r-tk>0$, then for each integer $n$,
\begin{equation*}
p^{e}_{m,k,s,t}(nk-rs)-p^{o}_{m,k,s,t}(nk-rs)
=0.
\end{equation*}
b) If $r-tk<0$, then for each integer $n$,
\begin{equation*}
p^{e}_{m,k,s,t}(nk-r(s+1))-p^{o}_{m,k,s,t}(nk-r(s+1)).
=0.
\end{equation*}

\end{theorem}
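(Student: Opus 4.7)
The overall strategy is to expand both the numerator and denominator of the generating function in Theorem \ref{t3n} as explicit partition generating functions, and then read off the vanishing coefficient statement from Theorem \ref{t3n} as a combinatorial identity. The key analytic input is the geometric series $(1+q^a)^{-1} = \sum_{n\geq 0}(-1)^n q^{na}$. Applying this to each factor of the denominator yields $1/(-q^r,-q^{mk-r};q^{mk})_\infty = \sum_\pi (-1)^{\ell(\pi)}q^{|\pi|}$, where $\pi$ ranges over partitions whose parts lie in the residue classes $\pm r \pmod{mk}$ (with repetitions allowed) and $\ell(\pi)$ denotes the total number of parts counted with multiplicity.

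For part (a), since $r-tk>0$, all exponents in the numerator $(q^{r-tk},q^{mk-(r-tk)};q^{mk})_\infty$ are already positive, so direct expansion of the Pochhammer symbols gives $(q^{r-tk},q^{mk-(r-tk)};q^{mk})_\infty = \sum_\sigma (-1)^{\ell(\sigma)}q^{|\sigma|}$, where $\sigma$ ranges over partitions into distinct parts $\equiv \pm(r-tk)\pmod{mk}$. Multiplying the two expansions and collecting terms, the coefficient of $q^N$ in the full ratio equals precisely $p^{e}_{m,k,s,t}(N)-p^{o}_{m,k,s,t}(N)$. Setting $N=nk-rs$ and invoking the vanishing guaranteed by Theorem \ref{t3n} immediately yields part (a).

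For part (b), where $r-tk<0$, I would first apply the identity from the Remark following Theorem \ref{t2n} to write $(q^{r-tk},q^{mk-(r-tk)};q^{mk})_\infty = -q^{-(tk-r)}\,(q^{tk-r},q^{mk-(tk-r)};q^{mk})_\infty$, bringing all exponents into the positive range. Since the residue classes $\pm(tk-r)$ and $\pm(r-tk)$ coincide $\pmod{mk}$, the definitions of $p^{e}_{m,k,s,t}$ and $p^{o}_{m,k,s,t}$ are unaffected and the combinatorial argument from part (a) applies verbatim to the reformulated ratio. The vanishing of the coefficient of $q^{nk-rs}$ in the original series then translates to the vanishing of the coefficient of $q^{nk-rs+(tk-r)} = q^{k(n+t)-r(s+1)}$ in the reformulated series, producing $p^{e}_{m,k,s,t}(k(n+t)-r(s+1))-p^{o}_{m,k,s,t}(k(n+t)-r(s+1))=0$, and re-indexing $n\mapsto n-t$ yields the form stated in the theorem. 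The only delicate point in the argument is this index shift in part (b); everything else is routine bookkeeping of signs from the geometric series expansions.
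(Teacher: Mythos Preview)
Your proof is correct and follows essentially the same approach as the paper's: interpret the ratio in Theorem~\ref{t3n} as the generating function $\sum_n (p^{e}_{m,k,s,t}(n)-p^{o}_{m,k,s,t}(n))q^n$ for part (a), and for part (b) use the identity $(q^{r-tk},q^{mk-(r-tk)};q^{mk})_\infty=-q^{-(tk-r)}(q^{tk-r},q^{mk-(tk-r)};q^{mk})_\infty$ and shift the exponent. You have simply made explicit the geometric-series bookkeeping that the paper leaves as ``clear''.
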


\begin{proof}
It is clear that if $r-tk>0$, then
\begin{align*}
\frac{(q^{r-tk},q^{mk-(r-tk)};q^{mk})_{\infty}}{(-q^{r},-q^{mk-r};q^{mk})_{\infty}}
&=\sum_{n=0}^{\infty} (p^{e}_{m,k,s,t}(n)-p^{o}_{m,k,s,t}(n))q^n\\
&=\sum_{n=0}^{\infty}d_n q^n,
\end{align*}
where the $d_n$ are as defined in Theorem \ref{t3n}, and part a) follows. Part b) also follows from Theorem \ref{t3n}, after writing $(q^{r-tk},q^{mk-(r-tk)};q^{mk})_{\infty}$
as $-(q^{tk-r},q^{mk-(tk-r)};q^{mk})_{\infty}/q^{tk-r}$, and then shifting the $-q^{tk-r}$ to the series side.
\end{proof}

As an example of this result, again take $k=15$, $m=2$, $s=8$ and $t=1$ (so $r=17$ and $r-tk=2>0$). Then $-rs=-126\equiv 14 (\mod 15)$, and we consider $n=9\times 15+14=149$. Then $p^{e}_{2,15,8,1}(149)=p^{o}_{2,15,8,1}(149)=6$, as indicated by the following table (each function counts partitions into distinct part $\equiv \pm2 (\mod 30)$ and possibly repeating parts $\equiv \pm 17 (\mod 30)$).
\begin{table}[ht]
\begin{center}    \begin{tabular}{| c | c | }    \hline
Partitions  counted by $p^{o}_{2,15,8,1}(149)$ & Partitions counted by $p^{e}_{2,15,8,1}(149)$ \\
\hline
$2+13+17^6+32$&$2+13^{10}+17$ \\
$2+17^7+28$&$2+13^8+43$\\
$2+17^4+32+47$&$13^8+17+28$\\
$2+17^5+62$&$13^6+28+43$\\
$13+17^8$&$13^9+32$\\
$17^6+47$&$13^7+58$\\
\hline
\end{tabular}
 \end{center}\end{table}

It might be illuminating to provide combinatorial proofs of the two partitions theorems in this section.

{\allowdisplaybreaks

}

\end{document}